\setlist{nolistsep}
\newtheorem{thm}{Theorem}
\newtheorem*{thm*}{Main Theorem}
\newtheorem*{thm**}{Theorem}
\newtheorem{lem}[thm]{Lemma}
\theoremstyle{definition}
\theoremstyle{definition}
\theoremstyle{definition}
\theoremstyle{definition}
\theoremstyle{definition}
\theoremstyle{definition}
\theoremstyle{definition}
\newcommand{\R}{\ensuremath{\mathbb{R}}}
\newcommand{\N}{\ensuremath{\mathbb{N}}} 
\newcommand*{\medcup}{\mathbin{\scalebox{1}{\ensuremath{\bigcup\,}}}}%
\def\i{\infty}
\def\co{\textrm{Conv}}
\def\fr{\partial}
\def\cal{\mathcal}
\def\a{\alpha}
\def\G{\Gamma}
\def\d{\delta}
\def\e{\epsilon}
\def\H{\mathcal{H}}
\def\downto{\searrow}
\renewcommand{\theenumi}{(\alph{enumi})}
\renewcommand{\p@enumii}{\theenumi}
\newcommand{\eqnum}{\refstepcounter{equation}}
\begin{document} 
	
\author[H. Pugh]{H. Pugh\\ Mathematics Department \\ Stony Brook University} 
\title{Reifenberg's Isoperimetric Inequality Revisited}
\begin{abstract}
	We prove a generalization of Reifenberg's isoperimetric inequality. The main result of this paper is used in \cite{elliptic} to establish existence of a minimizer for an anisotropically-weighted area functional among a collection of surfaces which satisfies a set of axioms, namely being closed under certain deformations and Hausdorff limits. This problem is known as the \emph{axiomatic Plateau problem.}
\end{abstract}

\maketitle

\section{Introduction}
Let \( n\geq m\geq 2 \) be integers and let \( \H^d \) denote \( d \)-dimensional Hausdorff measure. In \cite{reifenberg}, the following isoperimetric inequality is proved:

\begin{thm}[\cite{reifenberg} Lemma 8, Chapter 1]
	\label{thm:lem8}
	There exists a constant \( K<\i \) which depends only on \( n \), such that if \( A\subset \R^n \) is compact, there exists a compact set \( X\subset \R^n \) which spans \( A \) and satisfies \[ \H^m(X)^{m-1} \leq K\cdot \H^{m-1}(A)^m. \] Furthermore, \( X \) is contained in the convex hull of \( A \) and in the closed neighborhood of \( A \) of radius\footnote{If \( \H^{m-1}(A)=0 \), we understand the neighborhood of radius zero of \( A \) to mean the set \( A \) itself.} \( K\cdot \H^{m-1}(A)^{1/(m-1)} \).
\end{thm}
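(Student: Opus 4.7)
The plan is to build $X$ by straight-line cones over $A$ at a single length scale adapted to the total $(m-1)$-mass of $A$, then verify the spanning condition via the topology of the cover.

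First I would reduce to $0 < \mu := \H^{m-1}(A) < \infty$ (if $\mu = 0$ take $X := A$, per the footnote; if $\mu = \infty$ the inequality is vacuous) and fix the scale $r := \mu^{1/(m-1)}$, chosen precisely so that $r\mu = \mu^{m/(m-1)}$. By compactness and the Vitali $5r$-lemma applied to $\{B(x, r) : x \in A\}$, extract a finite subfamily with centers $x_1, \ldots, x_N \in A$ such that $A \subset \bigcup_i B(x_i, r)$ and $\{B(x_i, r/5)\}_i$ is pairwise disjoint; the latter forces an overlap multiplicity for $\{B(x_i, r)\}$ bounded by a constant $M(n)$. Setting
\[ C_i := \{(1-t)x_i + ta : t \in [0,1],\ a \in A \cap \bar B(x_i, r)\}, \qquad X := A \cup \bigcup_{i=1}^N C_i, \]
I obtain a compact $X$ contained in the convex hull of $A$ (each point of $C_i$ is a convex combination of two points of $A$) and in the closed $r$-neighborhood of $A$ (each segment has length at most $r$ from an apex in $A$).

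For the measure estimate, $\mu < \infty$ forces $\H^m(A) = 0$, and the standard cone (Eilenberg) inequality gives $\H^m(C_i) \leq (r/m)\,\H^{m-1}(A \cap \bar B(x_i, r))$. Bounded overlap then yields
\[ \H^m(X) \leq \tfrac{M(n)}{m}\, r \mu = \tfrac{M(n)}{m}\, \mu^{m/(m-1)}, \]
and raising to the $(m-1)$-st power gives $\H^m(X)^{m-1} \leq K\, \H^{m-1}(A)^m$ with $K := (M(n)/m)^{m-1}$; the same $r$ controls the neighborhood radius up to a constant.

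The main obstacle is verifying that $X$ actually spans $A$ in the \v{C}ech-theoretic sense of \cite{reifenberg}. Each individual cone $C_i$ deformation retracts to the apex $x_i$ and so locally kills any \v{C}ech class carried in $A \cap \bar B(x_i, r)$, but a global class on $A$ need not decompose as a sum of such local pieces. I would attempt two routes to close this gap: (i) a nerve / Mayer--Vietoris argument on the cover $\{B(x_i, r)\}$, using that the vertices of the nerve lie in $X$ and that each $C_i$ contractibly caps $A \cap B(x_i, r)$; or (ii) iteration on a geometric sequence of scales $r_k := r/2^k$, so that classes supported on smaller and smaller sets are killed at each stage while the added $m$-measure at stage $k$ is bounded by the residual $(m-1)$-mass times $r_k$ and is therefore summable via a geometric series. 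Either route requires the covering constants and scale hierarchy to be tuned compatibly with the topology, and that compatibility is where I expect the main technical difficulty to concentrate.
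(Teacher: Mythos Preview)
Your construction has a genuine gap at exactly the point you flag, and neither of your proposed fixes can close it. Take $m=2$ and $A$ a circle of radius $R$ with $R\gg r=\H^1(A)^{1/1}=2\pi R$---wait, that is not $\gg$; take instead $A$ a union of many small circles, or more cleanly just note that for a single circle of circumference $\ell$ your scale is $r=\ell$, so the cover by $r$-balls is a single ball and your $X$ is the full cone, which does span. The real failure shows up one codimension higher: let $A\subset\R^3$ be a round $2$-sphere of area $a$, so $r=a^{1/2}$ is much smaller than the diameter when $a$ is large. Each cone $C_i$ is a small cap-shaped set of diameter $\le 2r$; the union $X$ is contained in the $r$-neighborhood of $A$, which for large spheres is a thin spherical shell that still carries the fundamental $2$-cycle of $A$. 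Route (i) cannot help because the nerve of a good cover of a sphere by small balls has the homotopy type of the sphere, so Mayer--Vietoris just reproduces the original class. Route (ii) is worse: halving the scale only makes the cones smaller, so at every stage $X_k$ stays inside a thin shell around $A$ and the hole is never filled. Geometric-scale iteration can shrink the \emph{support} of a class but cannot fill a hole whose diameter is large compared to the initial scale.

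The mechanism Reifenberg uses (and that the present paper follows in proving its Theorem~\ref{thm:main}) is orthogonal to yours: it is a \emph{dimension} induction via slicing, not a \emph{scale} induction via covering. One chooses a family of parallel affine hyperplanes $\Sigma_j$ at spacing $L\sim\H^{m-1}(A)^{1/(m-1)}$; the Eilenberg inequality (Lemma~\ref{lem:4}) gives $\H^{m-2}(A\cap\Sigma)\lesssim L^{-1}\H^{m-1}(A)$, so by the inductive hypothesis in dimension $m-1$ one can span each slice $A\cap\Sigma_j$ by a set $X_j$ with $\H^{m-1}(X_j)$ controlled. Then in each slab $W_j$ between consecutive hyperplanes one cones $(A\cap W_j)\cup X_j\cup X_{j+1}$ to a point (Lemma~\ref{lem:6}); because the slab has width $L$ this contributes $\lesssim L\cdot\H^{m-1}$ of $m$-mass, and summing over slabs gives the bound. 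Spanning is automatic because each slab's piece of $A$, together with the inductively-built caps $X_j,X_{j+1}$ on its two faces, is coned to a point and hence contractible in $X$; the global class is killed slab by slab rather than ball by ball. This downward induction on the number of ``long'' directions (the parameter $k$ in the propositions $P(m,k,N,n)$) is the idea your proposal is missing.
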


The precise definition of ``span'' used by Reifenberg is not needed here, but roughly this means that \( X \) contains \( A \) and fills in the \( m \)-dimensional holes of \( A \). We prove the following generalization of this result:

\begin{thm}
	\label{thm:main0}
	There exists a constant \( K<\i \) which depends only on \( n \) such that if \( A\subset \R^n \) and \( Y\subset \R^n \) are compact, and if \( L>0 \) is an upper bound for \( \H^{m-1}(A)^{1/(m-1)} \), then there exists a sequence of diffeomorphisms \( (\phi_i)_{i\in \N} \) of \( \R^n \) leaving \( A \) fixed such that the Hausdorff limit \( \tilde{Y} \) of \( (\phi_i(Y))_{i\in \N} \) exists and satisfies \[ \H^m(\tilde{Y})^{m-1} \leq K\cdot \H^{m-1}(A)^m. \] Furthermore, \( \tilde{Y} \) is contained, up to a closed \( (m-1) \)-rectifiable exceptional set of finite \( \H^{m-1} \) measure, in the convex hull of \( A \) and in the closed neighborhood of \( A \) of radius \( K\cdot L \). Finally, if \( A \) is \( (m-1) \)-rectifiable, then \( \tilde{Y} \) is \( m \)-rectifiable.
\end{thm}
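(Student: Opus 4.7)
The plan is to reduce Theorem \ref{thm:main0} to Reifenberg's original isoperimetric inequality (Theorem \ref{thm:lem8}) by exhibiting a sequence of diffeomorphisms that transports $Y$ onto a span of $A$. First, I would invoke Theorem \ref{thm:lem8} on $A$, obtaining a compact set $X\subset \R^n$ spanning $A$ with $\H^m(X)^{m-1}\leq K_0\cdot \H^{m-1}(A)^m$, $X\subset \co(A)$, and $X$ contained in the closed $K_0 L$-neighborhood of $A$. The remainder of the proof is then devoted to constructing diffeomorphisms $\phi_i$ of $\R^n$ that fix $A$ and for which $\phi_i(Y)$ Hausdorff-converges (passing to a subsequence if necessary) to a set essentially supported on $X\cup A$.

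The central technical ingredient is a ``collapsing map'' $r\colon \R^n\to \R^n$ which fixes $A$ and has image in $X\cup A\cup E$, where $E$ is a closed $(m-1)$-rectifiable exceptional set of finite $\H^{m-1}$ measure. I envision $r$ built on a Whitney-type decomposition of $\R^n\setminus A$: in each Whitney cube, apply a radial projection from a generic center toward the intersection of $(X\cup A)$ with the cube, with $E$ accumulating the centers of projection and the lower-dimensional singular skeletons produced when iterating the projection down through faces. This is modeled on the Federer--Fleming deformation principle, but with target $X\cup A$ in place of a fixed cubical $m$-skeleton, and modified so that $r$ fixes $A$. Control of $\H^{m-1}(E)$ follows from summing $(m-1)$-measures of singular skeletons across Whitney scales, exploiting the geometric decay of cube sizes near $A$.

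Next, I would approximate $r$ by diffeomorphisms $\phi_i$: smooth each radial projection into a compression-stretch supported in a slightly larger cube, with compression parameter $\e_i\to 0$. Each $\phi_i$ fixes $A$ and agrees with $r$ outside an $\e_i$-neighborhood of the singular set, so $\phi_i(Y)$ Hausdorff-converges to $\tilde Y=r(Y)\subset X\cup A\cup E$. The desired properties of $\tilde Y$ then follow: the bound $\H^m(\tilde Y)^{m-1}\leq K\cdot \H^{m-1}(A)^m$ comes from $\H^m(\tilde Y)\leq \H^m(X)$ (since $\H^m(E)=0$) together with Reifenberg's bound on $X$; the location constraint follows from $X\subset \co(A)\cap \overline{N_{K_0 L}(A)}$ with any deviation absorbed into $E$; and $m$-rectifiability of $\tilde Y$ when $A$ is $(m-1)$-rectifiable follows because Reifenberg's construction yields an $m$-rectifiable $X$ in that case, while $E$ is $(m-1)$-rectifiable.

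The main obstacle is engineering $r$---and its diffeomorphism approximation---so that $\tilde Y$ satisfies all required bounds simultaneously. The exceptional set $E$ must have finite $\H^{m-1}$ measure (requiring a careful accounting of singular skeletons across Whitney scales), the smoothed diffeomorphisms must not inflate the $\H^m$-measure of the limit beyond Reifenberg's bound (requiring uniform estimates as $\e_i\to 0$), and the construction must fix $A$ and behave well in the regime where Whitney cubes shrink toward $A$. Balancing these competing demands---in particular, ensuring that approximation of the rough projection $r$ by smooth diffeomorphisms preserves both the mass bound and the location constraint---will be the most technically delicate step.
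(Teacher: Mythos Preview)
There is a genuine gap in your plan, and it lies precisely at the step you flag as most delicate: the construction of the collapsing map $r$ with image in $X\cup A\cup E$. Radial projection from a generic center of a Whitney cube sends points to the \emph{boundary of that cube}, not ``toward the intersection of $X\cup A$ with the cube.'' Iterating such projections through faces lands you on the $m$-skeleton of the Whitney complex, which has no relation whatsoever to Reifenberg's set $X$. Reifenberg's $X$ is an arbitrary compact set with the stated measure bound; it carries no skeletal or retract structure that would let you project onto it. So the mechanism you invoke (Federer--Fleming) simply does not produce a map with the image you need, and there is no evident substitute: one cannot, in general, deform an arbitrary compact $Y$ onto an arbitrary compact $X$ by ambient diffeomorphisms fixing $A$, even allowing an $(m-1)$-dimensional exceptional set.

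The paper's proof avoids this obstacle by \emph{not} treating Reifenberg's theorem as a black box. Instead it re-runs Reifenberg's inductive slice-and-cone construction from scratch, but at every step replaces the cone $C(\,\cdot\,,p)$ by a sequence of diffeomorphisms (Lemma~\ref{lem:2}) that pushes the current set into that cone while fixing $A$. The target set $\tilde A$ (playing the role of $X$), the exceptional set $\tilde S$, and the diffeomorphisms are thus built simultaneously, dimension by dimension, via a doubly-indexed induction encoded in the propositions $P(m,k,N,n)$. The moral is that the isoperimetric bound and the deformation must be produced together; you cannot first obtain $X$ and then hope to retract onto it.
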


In fact, we will prove more, namely that the diffeomorphisms \( \phi_i \) and the exceptional set are essentially independent of \( Y \). Specifically:

\begin{thm}
	\label{thm:main}
	There exists a constant \( K<\i \) which depends only on \( n \) such that if \( A\subset \R^n \) is compact and contained in a ball \( B\subset \R^n \), and if \( L>0 \) is an upper bound for \( \H^{m-1}(A)^{1/(m-1)} \), then there exist a sequence \( (\phi_i)_{i\in \N} \) of diffeomorphisms of \( \R^n \) leaving \( A\cup (\R^n\setminus B) \) fixed, and a closed \( (m-1) \)-rectifiable set \( S \) of finite \( \H^{m-1} \) measure, so that if \( Y\subset B \) is closed and \( Y\cap \fr B\subset A \), then there exists a subsequence of \( (\phi_i(Y))_{i\in \N} \) whose Hausdorff limit \( \tilde{Y} \) exists and satisfies \[ \H^m(\tilde{Y})^{m-1} \leq K\cdot \H^{m-1}(A)^m. \] Furthermore, \( \tilde{Y}\setminus S \) is contained in the convex hull of \( A \) and in the closed neighborhood of \( A \) of radius \( K\cdot L \). Moreover, \( \tilde{Y}\cap \fr B\subset A \). Finally, if \( A \) is \( (m-1) \)-rectifiable, then \( \tilde{Y} \) is \( m \)-rectifiable.
\end{thm}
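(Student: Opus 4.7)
The plan is to construct a universal scaffold---a single sequence of diffeomorphisms $(\phi_i)$ and one $(m-1)$-rectifiable exceptional set $S$, depending only on $A$ and $B$---that pushes every admissible $Y$ into a spanning set for $A$ with Reifenberg mass bound. Once the scaffold is fixed, Blaschke's selection theorem supplies, for each individual $Y$, a Hausdorff-convergent subsequence $\phi_{i_k}(Y) \to \tilde{Y}$; the remaining work is to verify the four asserted properties for $\tilde{Y}$ using only the scaffold, never $Y$ itself.

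First I would fix a dyadic grid on $\R^n$ at initial scale $r \sim L$ and call a cube \emph{active} if it meets a small neighborhood of $A$, \emph{free} otherwise. A standard covering estimate bounds the number of active cubes at scale $r/2^k$ by $C(n)\,\H^{m-1}(A)/(r/2^k)^{m-1}$. On each free cube $Q$ at scale $r$, insert a smooth compactly supported pushout---a diffeomorphism of $\R^n$ that dilates a small interior ball of $Q$ outward toward $\partial Q$, identity off $Q$---and let $\phi_1$ be the composition of these pushouts over all free cubes at scale $r$. Since free cubes avoid $A$ and lie inside $B$, $\phi_1$ fixes $A \cup (\R^n \setminus B)$. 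Now subdivide each active cube into $2^n$ cubes of side $r/2$, reclassify, and define $\phi_2 := \phi_1$ followed by the free-cube pushouts at scale $r/2$; continue inductively, with each successive layer of pushouts made strictly stronger. Each $\phi_i$ is a diffeomorphism, and the iteration flushes the content of $Y$ lying outside the active region at scale $r/2^i$ onto ever-lower-dimensional skeletons while leaving $A$ and $\R^n \setminus B$ pointwise fixed.

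Take $S$ to be the closure of the union, over $k \geq 1$, of the $(m-1)$-faces of the active cubes at scale $r/2^k$. Each scale contributes at most $C(n)\,\H^{m-1}(A)$ to $\H^{m-1}(S)$, and tuning pushout strengths so that only the faces genuinely carrying limiting mass are retained (a Federer--Fleming-style absorption step) produces a geometrically summable total $\H^{m-1}(S) \leq C(n)\,\H^{m-1}(A) < \i$. The set $S$ is $(m-1)$-rectifiable as a countable union of affine faces. For any admissible $Y$, $\phi_{i_k}(Y) \setminus S$ is the residue flushed to ever-finer active cubes, and its Hausdorff limit lies in $\co(A)$ and in the closed $KL$-neighborhood of $A$, since active cubes shrink onto that set as $k \to \i$. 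The condition $\tilde{Y}\cap \partial B\subset A$ is preserved because $\phi_i$ fixes $\partial B\setminus A$. The mass bound $\H^m(\tilde{Y})^{m-1} \leq K\cdot \H^{m-1}(A)^m$ then follows either by applying Theorem \ref{thm:lem8} to the limiting spanning set or by a direct face-count scale-by-scale. Rectifiability of $\tilde{Y}$ when $A$ is $(m-1)$-rectifiable follows from standard projection--retraction arguments inside each active cube.

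The main obstacle is the universality of the scaffold: the $\phi_i$ must be chosen before $Y$ is seen, yet the pushouts must be tuned so that for \emph{every} admissible $Y$ the limit $\tilde{Y}$ satisfies both the neighborhood and the mass bounds. Too weak a pushout lets parts of $\phi_i(Y)$ persist in free cubes, violating the neighborhood condition; too strong a pushout concentrates surplus mass on $S$, violating its finite-measure requirement. Balancing these tensions---while ensuring the smoothed pushouts remain genuine diffeomorphisms that fix $A$ pointwise and respect $\partial B$---is the crux of the argument, and is what distinguishes Theorem \ref{thm:main} from the weaker, $Y$-dependent Theorem \ref{thm:main0}.
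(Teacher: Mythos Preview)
Your approach is genuinely different from the paper's. The paper follows Reifenberg's original scheme: it sets up a family of propositions $P(m,k,N,n)$ and proves them by downward induction on $k$ and upward induction on $m$. At each stage one slices the current region $W$ by parallel hyperplanes at spacing $L$ (positions chosen via the Eilenberg inequality so that $A$ meets the slices in controlled $\H^{m-2}$-measure), applies the inductive hypothesis $P(m-1,0,N-1,n)$ on each slice, and then pushes each slab \emph{inward} toward a single cone point by the vector-field flow of Lemma~\ref{lem:2}. The exceptional set $\tilde S$ is built as a finite union of cones over the inductively produced lower-dimensional exceptional sets. Your outward Federer--Fleming-style pushout onto dyadic skeleta is closer in spirit to Almgren's construction referenced in the introduction than to Reifenberg's, and could in principle be developed, but the sketch as written has real gaps.

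Two are decisive. First, the convex-hull containment $\tilde Y\setminus S\subset\co(A)$: pushing onto faces of a fixed dyadic grid deposits material on affine pieces that can sit well outside $\co(A)$ even while lying in $N(A,KL)$; your claim that ``active cubes shrink onto that set'' explains only the neighborhood inclusion, not the convex-hull one. The paper gets $\co(A)$ by choosing every cone point $p$ inside $(A\cap W)\setminus S$, so that each segment in $C(A\cap W,p)$ has both endpoints in the right set; nothing in your scaffold plays that role. Second, your exceptional set $S$---the closure of the union over all scales $k$ of the $(m-1)$-faces of active cubes---contributes, by your own covering estimate, $\sim C(n)\,\H^{m-1}(A)$ at \emph{each} scale, and the sum over $k$ diverges. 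The proposed fix, ``tuning pushout strengths so that only the faces genuinely carrying limiting mass are retained,'' is circular: which faces carry mass depends on $Y$, but $S$ must be fixed before any $Y$ is seen. A third, smaller issue is that a single pushout in a free cube lands $Y$ on the $(n-1)$-skeleton, not the $(m-1)$-skeleton; descending further by diffeomorphisms of $\R^n$ requires controlling how a pushout along one face disturbs material already deposited on adjacent faces---this is precisely the purpose of the $\fr(Z,W)$ and $Q(\e)$ bookkeeping in Lemma~\ref{lem:2}, which your outline does not supply.
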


We will prove even slightly more, and for details we refer the reader to the proposition \( P(m,0,n,n) \) below, which we will prove in the course of proving Theorem \ref{thm:main}.

Theorem \ref{thm:main0} generalizes Theorem \ref{thm:lem8} in the case\footnote{When \( \H^{m-1}(A)=0 \), Theorem \ref{thm:lem8} follows from dimension theory (see \cite{hurewicz} VII 3, VIII 3',) whereby such a set \( A \) has no \( m \)-dimensional holes, so the set \( X=A \) satisfies the requirements of Theorem \ref{thm:lem8}. In this case, Theorem \ref{thm:main0} gives a different set \( \tilde{Y} \) for each choice of \( L>0 \).} that \( \H^{m-1}(A)>0 \) by letting \( L \) be the constant \( \H^{m-1}(A)^{1/(m-1)} \) and \( Y \) be the inward cone\footnote{See definition below.} over \( A \) about any point \( p\in \R^n \). The exceptional set can then be removed from \( \tilde{Y} \) by a standard dimension theory argument (see \cite{reifenberg} Lemma 2A, \cite{cohomology} Lemma 1.2.18, \cite{hurewicz} VII 3) to produce the desired set \( X \).

A related, logically independent inequality is proved in \cite{almgrenannals} (2.9 (b2)) in the case that \( A=Y\cap \fr B \), where the term \( \H^{m-1}(A) \) is replaced by \( \frac{d}{dt} \H^m(X\cap B_t(p))\lfloor_{t=r} \), where \( B_t(p) \) denotes the ball of radius \( t \) about \( p \), and \( B_r(p)=B \).

\section{Notation}
If \( X\subset \R^n \) and \( p\in \R^n \), let \( C(X,p) \) denote the inward cone over \( X \) about \( p \). That is, \( C(X,p) = \bigcup_{x\in X} l(x,p) \), where \( l(x,p) \) denotes the closed line segment between \( x \) and \( p \). Let \( \co(X) \) denote the convex hull of \( X \). If \( r>0 \), let \( N(X,r) \) denote the closed neighborhood about \( X \) of radius \( r \). Let \( \fr(X) \) denote the boundary of \( X \) in the unique minimal affine subspace of \( \R^n \) which contains \( X \).

We adopt the convention that the Hausdorff measure of \( X \) in dimension \( d \) is normalized by \( 1/2^d \). That is, \[ \H^d(X)=\lim_{r\downto 0}\inf \left\{ \sum \left(\frac{\textrm{diam}(U_i)}{2}\right)^d :\, \medcup U_i\supset X,\, \textrm{diam}(U_i)< r \right\}. \] We say the set \( X \) is \emph{\textbf{\( d \)-rectifiable}} if there is a countable collection of Lipschitz maps \( f_i:\R^d\to \R^n \) such that \( X\setminus \medcup_i f_i(\R^d) \) is an \( \H^d \) null set. To simplify notation, we will also say \( X \) is \( (-1) \)-rectifiable if \( X \) is empty.

We will make use of the following additional terminology: If \( d \) is an integer such that \( 0\leq d\leq n \) and \( B \) is a closed ball in \( \R^n \), a \emph{\textbf{\( d \)-slice of \( B \)}} is a subset \( W \) of \( \R^n \) such that \( W\cap \textrm{int} (B) \) is non-empty, and such that \[ W= \left\{(x_1,\dots,x_n)\in \R^n: l_i\leq x_i\leq r_i \textrm{ for all } i\in I \textrm{, and } x_j=c_j \textrm{ for all } j\in \{1,\dots,n\}\setminus I \right\} \] for some \( I\subset \{1,\dots,n\} \) satisfying \( |I|=d \), where \[ -\i\leq l_i<r_i\leq\i \textrm{ and } c_j\in \R \textrm{ for all } i\in I \textrm{ and } j\in \{1,\dots,n\}\setminus I. \] Thus, \( W \) is a \( d \)-dimensional rectangle which may have infinite length in some or all of its directions. If \( i\in I \) and \( e_i \) is the \( i \)-th coordinate vector of \( \R^n \), we say the \emph{\textbf{width of \( W \) in the direction of \( e_i \)}} is \( r_i-l_i \). If \( 0\leq k\leq d \), we say the \emph{\textbf{\( k \)-width of \( W \)}} is \[ \inf_{\tilde{I}\subset I, |\tilde{I}|=k} \sup_{i\in \tilde{I}} (r_i-l_i), \] unless \( k=0 \), in which case we say the \( 0 \)-width of \( W \) is zero. Note that since \( W \) intersects the interior of \( B \), the set \( \fr(W\cap B) \) is, per our notation above, the boundary of \( W\cap B \) in the unique \( d \)-dimensional affine subspace of \( \R^n \) which contains \( W \).

If \( \e>0 \), let \( W(\e) \) denote the \( n \)-slice of \( B \) given by \( W+Q_\e \), where \( Q_\e \) is the coordinate \( (n-d) \)-cube orthogonal to \( W \) with side-length \( 2\e \) and centered at zero. If \( Z\subset B \) is closed, Let \( \fr(Z,W) \) denote the subset of \( Z\cap \fr(W\cap B) \) in the closure of \( Z\cap \textrm{int}\left(W(\e)\cap B\right) \) (noting that \( \fr(Z,W) \) is independent of the choice of \( \e>0 \).)

The notation \( \fr(Z,W) \) will first be used in Lemma \ref{lem:2}. Given a closed set \( Z\subset B \) and a \( d \)-slice \( W \) of \( B \), we will want to replace \( Z \) with a set \( \tilde{Z} \) such that \( \tilde{Z}\cap W \) is contained in \( C(Z\cap \fr(W\cap B),p) \) for some \( p\in W\cap B \). We will produce the set \( \tilde{Z} \) by deforming \( Z \) via a sequence of diffeomorphisms. We will perform this process inductively for a sequence of \( d \)-slices of \( B \). However, since \( Z\cap \fr(W\cap B) \) might have been altered by a previous deformation occurring on a slice \( W' \) of \( B \) adjacent to \( W \), we will want some finer control over the resulting set \( \tilde{Z}\cap W \). So, we will show that \( \tilde{Z}\cap W \) is actually contained in \( C(\fr(Z,W),p)\subset C(Z\cap \fr(W\cap B),p) \), the set \( \fr(Z,W) \) being unchanged by deformations occurring on adjacent slices.

\section{Lemmas and Constructions}
The following results will be used in the proof of Theorem \ref{thm:main}:

\begin{lem}
	\label{lem:1}
	Suppose \( (Z_i)_{i\in \N} \) is a sequence of compact subsets of \( \R^n \) and that \( (Z_i)_{i\in \N} \) converges in the Hausdorff metric to \( Z \). If \( \{f_j: \R^n\to \R^n \} \) is a sequence of continuous maps and \( (f_j(Z))_{j\in \N} \) converges in the Hausdorff metric to \( W \), then \( W \) is a limit point of \( (f_j(Z_i))_{(i,j)\in \N\times \N} \).
\end{lem}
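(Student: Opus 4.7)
The goal is to produce, for each $\epsilon>0$, infinitely many pairs $(i,j)\in \N\times\N$ satisfying the Hausdorff-distance inequality $d_H(f_j(Z_i),W)<\epsilon$, which is exactly what it means for $W$ to be a limit point of the doubly-indexed sequence.

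Fix $\epsilon>0$. The approach is a standard $\epsilon/2$ argument. First, using the hypothesis that $f_j(Z)\to W$, choose $J\in\N$ so that $d_H(f_j(Z),W)<\epsilon/2$ for all $j\geq J$. The plan is then to show that for each such $j$ and for all sufficiently large $i$ (depending on $j$) we have $d_H(f_j(Z_i),f_j(Z))<\epsilon/2$, whence the triangle inequality yields $d_H(f_j(Z_i),W)<\epsilon$.

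The step requiring care is transferring Hausdorff convergence $Z_i\to Z$ across the map $f_j$, since $f_j$ is only assumed continuous, not uniformly continuous on $\R^n$. I would resolve this by noting that the convergent sequence $Z_i\to Z$ is eventually contained in some fixed closed ball $K\subset\R^n$ containing a neighborhood of $Z$; the restriction $f_j|_K$ is uniformly continuous, so there exists $\delta_j>0$ with $x,y\in K$ and $|x-y|<\delta_j$ implying $|f_j(x)-f_j(y)|<\epsilon/2$. Taking $i$ large enough that $Z_i\subset K$ and $d_H(Z_i,Z)<\delta_j$, a direct check shows $d_H(f_j(Z_i),f_j(Z))<\epsilon/2$, as every point of $f_j(Z_i)$ is within $\epsilon/2$ of a point of $f_j(Z)$ and vice versa.

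Combining, for every $\epsilon>0$ and every $J\in\N$ there exist $j\geq J$ and arbitrarily large $i\in\N$ with $d_H(f_j(Z_i),W)<\epsilon$. Hence every Hausdorff-metric neighborhood of $W$ contains infinitely many terms of $(f_j(Z_i))_{(i,j)\in\N\times\N}$, so $W$ is a limit point as claimed. The only real obstacle is the one just addressed — being explicit that uniform continuity is available on the compact set $K$ even though the $f_j$ are merely continuous on $\R^n$; once that is in place the rest is the $\epsilon/2$ triangle inequality.
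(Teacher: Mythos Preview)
Your proof is correct and follows the same approach as the paper: the paper's one-line proof simply asserts the key observation that for each fixed \( j \), the sequence \( (f_j(Z_i))_{i\in\N} \) converges to \( f_j(Z) \) in the Hausdorff metric, and you have supplied exactly the standard uniform-continuity-on-a-compact-set argument that justifies this observation. The only quibble is a minor notational reuse of \( J \) in your final paragraph, but the reasoning is sound.
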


\begin{proof}
	This follows immediately from the observation that for each \( j\in \N \), the sequence \( (f_j(Z_i))_{i\in \N} \) converges to \( f_j(Z) \) in the Hausdorff metric.
\end{proof}

\begin{lem}
	\label{lem:4}
	If \( X\subset \R^n \) is a Borel set and \( X_h \) denotes the points of \( X \) at distance \( h \) from a fixed affine hyperplane of \( \R^n \), then \[ \int_0^\i \H^{m-1} (X_h) \,d\H^1(h) \leq \H^m (X). \]
\end{lem}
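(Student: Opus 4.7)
The plan is to recognize this as the co-area inequality applied to the 1-Lipschitz function \( f(x) = \mathrm{dist}(x, H) \), where \( H \) is the fixed affine hyperplane, so that \( X_h = X \cap f^{-1}(h) \). I would prove it directly from the definition of Hausdorff measure given in the paper. Fix \( r>0 \) and any countable cover \( \{U_i\} \) of \( X \) by sets of diameter less than \( r \). Because \( f \) is 1-Lipschitz, \( f(U_i) \) lies in an interval of length at most \( \mathrm{diam}(U_i) \); hence the set \( J_i := \{h\geq 0 : U_i \cap X_h \neq \emptyset\} \) satisfies \( \H^1(J_i) \leq \mathrm{diam}(U_i)/2 \) in the \( 1/2^d \) normalization used throughout the paper.

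For each \( h\geq 0 \) the subcollection \( \{U_i : U_i \cap X_h \neq \emptyset\} \) covers \( X_h \) by sets of diameter less than \( r \), so the Hausdorff pre-measure satisfies
\[ \H^{m-1}_r(X_h) \leq \sum_{i:\, U_i \cap X_h \neq \emptyset} \left(\frac{\mathrm{diam}(U_i)}{2}\right)^{m-1}. \]
Integrating against \( d\H^1(h) \) and exchanging the sum and integral via Tonelli gives
\[ \int_0^\i \H^{m-1}_r(X_h) \, d\H^1(h) \leq \sum_i \left(\frac{\mathrm{diam}(U_i)}{2}\right)^{m-1} \H^1(J_i) \leq \sum_i \left(\frac{\mathrm{diam}(U_i)}{2}\right)^m. \]
Taking the infimum over covers of \( X \) at scale \( r \) produces \( \int_0^\i \H^{m-1}_r(X_h)\, d\H^1(h) \leq \H^m_r(X) \leq \H^m(X) \).

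Finally, let \( r\downto 0 \). The pre-measures \( \H^{m-1}_r(X_h) \) are monotone increasing in \( 1/r \) with limit \( \H^{m-1}(X_h) \), so the Monotone Convergence Theorem yields the desired inequality
\[ \int_0^\i \H^{m-1}(X_h) \, d\H^1(h) \leq \H^m(X). \]

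The main technical obstacle is the measurability of \( h \mapsto \H^{m-1}_r(X_h) \) required to invoke Tonelli. When \( X \) is Borel, the level sets \( X_h \) are Borel, and measurability can be established by approximating through covers drawn from a countable generating family (e.g., balls with rational centers and radii). Alternatively, one may interpret all integrals as upper integrals throughout the argument; the final inequality then holds without any measurability hypothesis, and in particular for the Borel set \( X \) of the statement.
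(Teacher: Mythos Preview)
Your argument is correct and is precisely the standard proof of the Eilenberg inequality in this special case; the paper's own proof consists of the single sentence ``This is a special case of the Eilenberg inequality \cite{eilenberg}.'' You have simply unpacked that citation, carefully tracking the paper's \(1/2^d\) normalization so that the constant comes out to~\(1\), and your remark that the argument goes through with upper integrals (obviating the measurability verification) is the usual way this is handled.
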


\begin{proof}
	This is a special case of the Eilenberg inequality \cite{eilenberg}.
\end{proof}

\begin{lem}
	\label{lem:2}
	If \( Q \) is a \( d \)-slice of a closed ball \( B\subset \R^n \), where \( 0\leq d \leq n \), a point \( p \in Q\cap B \) is given, and \( A\subset B \) is closed, then for \( \e>0 \) there exists a sequence of diffeomorphisms \( (\phi_i)_{i\in \N} \) of \( \R^n \) fixing \( A\cup (\R^n\setminus B) \) such that if \( Z\subset B \) is closed and \( \fr(Z,Q)\subset A \), then any subsequential Hausdorff limit \( \tilde{Z} \) of \( (\phi_i(Z))_{i\in \N} \) satisfies the following properties:
	\begin{enumerate}
		\item\label{lem:2:item:1} The set \( \tilde{Z}\cap Q \) is contained in \( \{p\}\cup C(A\cap Q,p) \cup (Z\cap \fr(Q\cap B)); \)
		\item\label{lem:2:item:2} The set \( \tilde{Z} \) is equal to \( Z \) on the complement of \( \{p\}\cup\left(Q\cap\textrm{int}(B)\right)\cup \textrm{int}\left(Q(\e)\cap B\right); \) and,
		\item\label{lem:2:item:3} The set \( \fr(\tilde{Z},Q(\e)) \) is contained in \( \{p\}\cup C(A\cap Q,p)\cup \fr(Z,Q(\e)). \)
	\end{enumerate}
\end{lem}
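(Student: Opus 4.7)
My plan is to realize the maps $\phi_i$ as the time-$i$ maps of the flow of a single smooth, compactly supported vector field $V$ on $\mathbb{R}^n$, designed so that $V$ points radially toward $p$ within copies of the affine hull of $Q$ and vanishes on every set that must be treated as fixed or as a flow equilibrium.

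\textbf{Construction.} Let $\pi\colon\mathbb{R}^n\to\mathrm{aff}(Q)$ denote orthogonal projection and write $x^\perp:=x-\pi(x)$. Choose smooth nonnegative cutoffs $\alpha,\beta,\gamma$ so that $\alpha(t)=0$ iff $t=0$, so that $\beta$ vanishes on a neighborhood of $\fr(Q(\e)\cap B)\cup\fr(Q\cap B)$ inside $Q(\e)\cap B$ and is positive elsewhere in $Q(\e)\cap B$, and so that $\gamma$ vanishes outside $B$. Set
\[
V(x)\;=\;\alpha\bigl(d(x,A)\bigr)\,\beta(x)\,\gamma(x)\,\bigl(p-\pi(x)\bigr),
\]
and let $\phi_i$ be the time-$i$ map of its flow. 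Since $V(x)$ is tangent to $\mathrm{aff}(Q)$ at every point, the flow preserves $x^\perp$, and trajectories lie on rays through $p+x^\perp$ inside the slice $\mathrm{aff}(Q)+x^\perp$. Moreover $V$ is smooth and supported in $\mathrm{int}(Q(\e)\cap B)\setminus A$, so each $\phi_i$ is a diffeomorphism of $\mathbb{R}^n$ fixing $A\cup(\mathbb{R}^n\setminus B)\cup\fr(Q(\e)\cap B)$. Property (2) is then immediate from the support of $V$.

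\textbf{Verification of (1).} Suppose $y\in\tilde Z\cap Q$, so $y=\lim_k\phi_{i_k}(z_k)$ with $z_k\in Z$. Because the flow preserves $x^\perp$, we have $z_k^\perp\to y^\perp=0$, and after extracting a subsequence $z_k\to z\in Z\cap Q$. Along any trajectory, $L(x):=|\pi(x)-p|^2$ satisfies $\dot L=-2\alpha\beta\gamma\,|p-\pi(x)|^2\le 0$ with equality only on the zero set of $V$, and a standard Lyapunov argument shows that every $\omega$-limit is contained in $\{V=0\}$. Consequently every accumulation point of $\phi_{i_k}(z_k)$ lies in $\{V=0\}\cap Q = (A\cap Q)\cup\{p\}\cup\fr(Q\cap B)$. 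Points of $A\cap Q$ and $\{p\}$ are contained in $\{p\}\cup C(A\cap Q,p)$. A point $y\in\fr(Q\cap B)\setminus A$ is instead a \emph{repellor} for the inward radial flow on $\mathrm{aff}(Q)$ (every nearby trajectory is pushed strictly inward, away from $y$), so $\phi_{i_k}(z_k)\to y$ forces $z_k\to y$; since $Z$ is closed, this yields $y\in Z\cap\fr(Q\cap B)$, completing (1).

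\textbf{Verification of (3) and main obstacle.} Property (3) is proved analogously: any $y\in\fr(\tilde Z,Q(\e))$ lies in $\{V=0\}\cap\fr(Q(\e)\cap B)$, and is either in $\{p\}\cup C(A\cap Q,p)$ (when $y=p$ or $y\in A\cap Q$) or else is a repellor of the flow in the translated slice through $y^\perp$; in the latter case, accessibility of $y$ from $\tilde Z\cap\mathrm{int}(Q(\e)\cap B)$ together with the $x^\perp$-invariance of the flow forces the approximating sequences from $Z$ to converge to $y$ through the interior of $Q(\e)\cap B$, placing $y$ in $\fr(Z,Q(\e))$. The delicate point throughout is that sequences $z_k\to z$ can have individual trajectories converging to \emph{different} equilibria (for instance, $z_k$ lying on rays that miss $A$ while $z$ lies on a ray meeting $A$), so the limit point of a trajectory depends discontinuously on its starting ray. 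Passing to the limit requires the explicit gradient-like radial form of $V$ and uniform decay bounds near the equilibrium set; this is the technical heart of the proof.
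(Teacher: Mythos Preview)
Your construction has the right shape, but the verification of (1) contains a genuine gap. The Lyapunov argument for $L(x)=|\pi(x)-p|^2$ shows that the $\omega$-limit set of a \emph{single} trajectory lies in $\{V=0\}$; it does \emph{not} show that an accumulation point of $\phi_{i_k}(z_k)$ with \emph{varying} $z_k$ lies in $\{V=0\}$. Indeed it need not. Take $A\cap Q=\{a\}$, and let $z_k\to a$ along rays from $p$ that miss $a$. Each trajectory $\phi_t(z_k)$ lingers near $a$ (where $V$ is small) and then runs to $p$; by choosing $i_k$ appropriately you can make $\phi_{i_k}(z_k)$ converge to any point $y$ on the open segment from $a$ to $p$, where $V(y)\neq 0$. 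Such $y$ is in $C(A\cap Q,p)$, so Property~(1) is not violated, but your stated reason for it (``accumulation points lie in $\{V=0\}$'') is false. The paper handles exactly this case: when $V(q)\neq 0$, one first shows $q'\neq q$ (trajectories cross a neighborhood of $q$ in bounded time), and then uses that $\phi_i(q_i)$ lies on the straight segment $l(q_i,p_i)$ to conclude $q\in l(q',p)$; a further dichotomy (either $V(q')=0$, forcing $q'\in A\cap Q$, or $A\cap l(q',q)\neq\emptyset$) places $q$ in $C(A\cap Q,p)$. This segment-tracking step is the substance of the lemma and is missing from your argument; the ``uniform decay bounds near the equilibrium set'' you allude to at the end do not supply it.

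There are also two smaller problems. First, $x\mapsto d(x,A)$ is only Lipschitz, so $\alpha(d(x,A))$ is not $C^\infty$ for a general closed $A$ (think $A=\{-1,1\}\subset\R$, where $d(\cdot,A)$ has a corner at $0$ with value $1$); you need instead a smooth nonnegative function vanishing exactly on $A$, constructed e.g.\ as in the paper. Second, your description of $\beta$ is inconsistent: if $\beta$ vanishes on an open neighborhood of $\fr(Q\cap B)$, then interior points of $Q\cap B$ near that boundary are fixed by every $\phi_i$ and remain in $\tilde Z\cap Q$, contradicting (1); if instead $\beta$ vanishes only \emph{on} $\fr(Q\cap B)$, you should say so, and then the ``repellor'' claim requires the same segment argument as above. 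Finally, note that when $p\in\fr B$ the flat transverse set $\{p+x^\perp\}$ need not lie in $B$; the paper bends this into a smooth disk $D\subset B$ transverse to $Q$ at $p$ precisely to avoid this, and your construction should do the same.
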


\begin{proof}
	We proceed via a generalization of the approximate cone argument given in \cite{delellisandmaggi}. Let \( E \) denote the \( d \)-dimensional linear subspace of \( \R^n \) such that \( p+E \) contains \( Q \) and let \( G \) be the \( (n-d) \)-dimensional linear subspace of \( \R^n \) orthogonal to \( E \). Let \( 0<\d<\e \) be small enough so that if \( x\in G \) and \( |x|\leq\d \), then \( x+Q \) intersects \( \textrm{int}(B) \) non-trivially. Thus there exists a closed \( (n-d) \)-disk \( D \) smoothly embedded in \( B \), centered at \( p \), and transverse to \( p+E \) at \( p \) such that:
	\begin{enumerate}
		\item[\eqref{transverse}] \( D\setminus \{p\} \subset \textrm{int}(((N(0,\d)\cap G)+Q)\cap B); \) \eqnum\label{transverse}
		\item[\eqref{transverse2}] the orthogonal projection \( \pi \) of \( D \) onto \( p+G \) is a diffeomorphism onto its image; and\eqnum\label{transverse2}
		\item[\eqref{transverse3}] \( \pi(D) \) contains \( p+(N(0,3\d/4)\cap G) \).\eqnum\label{transverse3}
	\end{enumerate}
	
	The maps \( \phi_i \) can be realized as the time \( i \) flows of a fixed compactly supported \( C^\i \) vector field\footnote{Such a vector field \( V \) is complete, see e.g. \cite{sharpe} Ch. 2 Prop. 1.6, so \( V \) generates a \( 1 \)-parameter group of diffeomorphisms \( \{(\phi_t: \R^n\to \R^n) : t\in \R \} \). We restrict \( t \) to \( \N \) to generate our sequence of diffeomorphisms \( \{\phi_i  \} \).} \( V \) on \( \R^n \), which we shall now define.  
	
	Let \( \hat{V} \) be the \( C^\i \) vector field on \( D+E \) given by \( \hat{V}(y+e)=-e \) for all \( y\in D \) and \( e\in E \), noting that \( \hat{V} \) is well-defined by \eqref{transverse2}. Extend \( \hat{V} \) to a (discontinuous) vector field on \( \R^n \) by setting \( \hat{V}(x)=0 \) for all \( x\in \R^n\setminus (D+E) \).
	
	Let \( F \) denote the complement in \( \R^n \) of the interior of the set \[ ((N(0,\d/2)\cap G)+Q)\cap B \] and let \( \theta: \R^n\to \R \) be a non-negative \( C^\i \) function which vanishes precisely on the closed set \( D\cup A \cup F \). Such a function exists by a standard argument: The complement of \( D\cup A \cup F \) is a countable union of open balls \( B_k \) of radius \( r_k \) and center \( x_k \). Let \[ \theta_k(x)=\begin{cases}
	\mathrm{exp}\left(\frac{-1}{r_k^2-\|x-x_k\|^2}\right) & \mbox{if}\ x\in B_k;\\
	0 & \mbox{if} \ x\in \R^n\setminus B_k. 
	\end{cases} \]
	Let \( \theta(x)=\sum_{k\in \N} a_k \cdot \theta_k(x) \), where 
	\[ a_k=\begin{cases}
	\frac{1}{2^k}\left(\sup_{|\a|\leq k,\,|x|\leq k}|\partial^{\a}\theta_k(x)|\right)^{-1} & \mbox{if}\ \sup_{|\a|\leq k,\, |x|\leq k}|\partial^{\a}\theta_k(x)|\neq 0;\\
	\frac{1}{2^k} & \mbox{otherwise.}
	\end{cases}
	\]
	It is straightforward to check that the series defining \( \theta(x) \) converges in the Fr\'{e}chet topology on \( C^\i(\R^n) \). Let \( V=\theta\cdot \hat{V} \). The vector field \( V \) is compactly supported and \( C^\i \) by \eqref{transverse3} and by the definition of \( F \), and vanishes precisely on the set \( D\cup A\cup F \).

	We first prove Property \ref{lem:2:item:1}. If \( q\in \tilde{Z}\cap Q \), then there exists a sequence of points \( q_i\in Z \) such that \( \phi_i(q_i)\to q \). If there exist arbitrarily large \( i\in \N \) such that \( \phi_i(q_i)= q_i \), then \( q \) must be in the zero set of \( V \) and in \( Z\cap Q \). We then have \[ q\in \{p\}\cup (A\cap Q) \cup (Z\cap \fr(Q\cap B)) \] and we are done. Otherwise, by taking a subsequence if necessary, we may assume \( \phi_i(q_i)\neq q_i \) (thus \( V(q_i)\neq 0 \)) for all \( i\in \N \) and that \( q_i \) converges to some \( q'\in Z \). In particular, by the definition of \( V \), 
	\begin{equation}
		\label{ZQ}
		\mathrm{if}\ q'\in \fr(Q\cap B)\mathrm{, then}\ q'\in \fr(Z,Q).
	\end{equation}
	
	Suppose \( V(q)=0 \). Then either \( q\in \{p\}\cup (A\cap Q) \) and we are done, or \( q\in \fr(Q\cap B)\setminus \{p\} \). In this case, we must have \( q'\in \fr(Q\cap B)\setminus \{p\} \), for if not, then the line segments \( l(q_i,p_i) \) do not contain \( \phi_i(q_i) \) for large enough \( i \), where \( p_i \) denotes the point in \( D \) with \( p_i-q_i\in E \). This is a contradiction by the definition of \( V \). Thus, if \( q=q' \), then we are done by \eqref{ZQ}, otherwise the line segment \( l(q',p) \) contains \( q \), since \( l(q_i,p_i) \) contains \( \phi_i(q_i) \), and again we are done.
	
	Now suppose \( V(q)\neq 0 \). Then there is a small closed neighborhood \( N \) of \( q \) on which the magnitude of \( V \) has a positive lower bound, and since trajectories of \( V \) are straight lines, there is a maximum \( t<\i \) such that the image of \( N \) under the time \( t \) flow of \( V \) intersects non-trivially with \( N \). Thus, \( q'\notin N \) and in particular \( q'\neq q \). If \( V(q')=0 \) (hence \( q'\in A\cap Q \),) or if \( A\cap l(q',q)\neq \emptyset \), then we are done, otherwise there is a small neighborhood of \( l(q', q) \) on which the magnitude of \( V \) has a positive lower bound and this yields a contradiction, and so Property \ref{lem:2:item:1} is established.
	
	We now prove Property \ref{lem:2:item:2}. It is only possible for \( \tilde{Z} \) and \( Z \) to differ on the support of \( V \), so by \ref{lem:2:item:1}, we need only compare \( Z \) and \( \tilde{Z} \) at those points \( q\in \fr((w+Q)\cap B) \) for \( w\in G \) with \( 0<\|w\|<\d/2 \). Since \( V(q)=0 \), it follows that if \( q\in Z \) then \( q\in \tilde{Z} \). On the other hand, suppose \( q\in \tilde{Z} \). By \eqref{transverse}, the vector field \( V \) points away from \( \fr((w+Q)\cap B) \), so if \( q_i\in Z \) and \( \phi_i(q_i)\to q \), then \( q_i\to q \), so \( q\in Z \) as well. 
	
	We now prove property \ref{lem:2:item:3}. Suppose \( x\in \fr(\tilde{Z},Q(\e))\cap (w+Q) \) for some \( w\in G \setminus \{ 0\} \). Then the same argument as used to prove Property \ref{lem:2:item:2} proves that \( x\in \fr(Z,Q(\e)) \). Now suppose \( x\in \fr(\tilde{Z},Q(\e))\cap Q \). Our proof of Property \ref{lem:2:item:1} actually shows that \( \tilde{Z}\cap (w+Q) \) is contained in \[ \{\pi^{-1}(w+p)\}\cup C((A\cap(w+Q))\cup \fr(Z,w+Q),\pi^{-1}(w+p)) \cup (Z\cap \fr((w+Q)\cap B)) \] for \( w\in G \setminus \{ 0\} \) small enough. Therefore, there exist \( x_i\to x \) and \( w_i\to 0 \) with \[ x_i\in \{\pi^{-1}(w_i+p)\}\cup C((A\cap (w_i+Q))\cup \fr(Z,w_i+Q),\pi^{-1}(w_i+p)), \] and so there exists \( y_i\in \{\pi^{-1}(w_i+p)\}\cup(A\cap(w_i+Q))\cup \fr(Z,w_i+Q) \) so that \( x_i\in l(y_i,\pi^{-1}(w_i+p)\}) \). Taking a subsequence if necessary, we have \( y_i\to y\in \{p\}\cup (A\cap Q) \), and so \( x\in l(y,p) \) and we are done.
\end{proof}

\begin{lem}
	\label{lem:6}
	If \( r>0 \), \( p\in \R^n \) and \( X\subset N(p,r) \), then \[ \H^m (C(X,p))\leq 2^{2m-1} \cdot r\cdot \H^{m-1} (X). \]
\end{lem}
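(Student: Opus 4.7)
The plan is to realize $C(X, p)$ as a $1$-Lipschitz image of the product $X \times [0, 1]$ equipped with a weighted $\ell^1$ metric, and then to bound the $m$-dimensional Hausdorff measure of this product directly from the product structure.

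Define $\phi \colon X \times [0, 1] \to \R^n$ by $\phi(x, t) = p + t(x - p)$; then $C(X, p) = \phi(X \times [0, 1])$. Endow the source with the metric $d((x, t), (x', t')) := |x - x'| + r |t - t'|$. The identity $\phi(x, t) - \phi(x', t') = t(x - x') + (t - t')(x' - p)$, combined with $t \leq 1$ and $|x' - p| \leq r$, shows that $\phi$ is $1$-Lipschitz with respect to $d$. Consequently $\H^m(C(X, p)) \leq \H^m_d(X \times [0, 1])$, where $\H^m_d$ denotes Hausdorff measure computed in the metric $d$.

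To estimate the right-hand side, fix $\epsilon, \delta > 0$ and choose a cover $\{U_i\}$ of $X$ in $\R^n$ with $d_i := \mathrm{diam}(U_i) < \delta$ and $\sum_i (d_i / 2)^{m-1} \leq \H^{m-1}(X) + \epsilon$. For each $i$, partition $[0, 1]$ into $N_i := \lceil r / d_i \rceil$ equal subintervals $I_{i, j}$ of length at most $d_i / r$. The collection $\{U_i \times I_{i, j}\}$ covers $X \times [0, 1]$ with pieces of $d$-diameter at most $d_i + r \cdot (d_i / r) = 2 d_i$, contributing
\[
\sum_{i, j} \left( \frac{\mathrm{diam}_d(U_i \times I_{i, j})}{2} \right)^m \leq \sum_i N_i \, d_i^m \leq r \sum_i d_i^{m-1} + \sum_i d_i^m \leq 2^{m-1} (r + \delta) (\H^{m-1}(X) + \epsilon),
\]
the last inequality using $d_i^{m-1} = 2^{m-1} (d_i / 2)^{m-1}$ together with $\sum_i d_i^m \leq \delta \sum_i d_i^{m-1}$. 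Letting $\epsilon, \delta \downarrow 0$ yields $\H^m_d(X \times [0, 1]) \leq 2^{m-1} \, r \, \H^{m-1}(X)$.

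Combining the two steps produces $\H^m(C(X, p)) \leq 2^{m-1} \, r \, \H^{m-1}(X) \leq 2^{2m-1} \, r \, \H^{m-1}(X)$, which is the claimed inequality (with room to spare). No serious obstacle arises; the only care needed is in respecting the $1 / 2^d$ normalization of Hausdorff measure used in this paper when translating between $\sum_i (d_i / 2)^{m-1}$ and $\H^{m-1}(X)$.
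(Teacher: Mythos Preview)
Your argument is correct. The map $\phi$ is indeed $1$-Lipschitz for the weighted $\ell^1$ metric you introduce, and the covering estimate for $\H^m_d(X\times[0,1])$ goes through as written (the only point to keep in mind is that one may assume each $d_i>0$ by slightly enlarging the covering sets, so that $N_i=\lceil r/d_i\rceil$ is finite). You even obtain the sharper constant $2^{m-1}$ in place of $2^{2m-1}$.

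As for comparison with the paper: there is essentially nothing to compare. The paper does not prove this lemma at all; it simply records that the statement is a special case of Lemma~6, Chapter~I of Reifenberg's original article and defers to that reference. Your self-contained covering argument therefore supplies strictly more than the paper does at this point. Reifenberg's original Lemma~6 is in fact proved by a direct covering argument of the same flavor (covering $X$ efficiently and then covering each cone $C(U_i,p)$ by balls stacked along the radial direction), so your approach is in the same spirit as the classical one, just phrased through the product metric and a Lipschitz image rather than by coning each covering set individually.
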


\begin{proof}
	This is a special case of \cite{reifenberg} Lemma 6, Chapter I.
\end{proof}

\section{Proof of Theorem \ref{thm:main}}

	To prove Theorem \ref{thm:main}, we shall take the set \( Y \) and successively apply Lemma \ref{lem:2} to it in various ways, using \( Z=Y \) as the input in the first application of Lemma \ref{lem:2}, the set \( \tilde{Z} \) as the input for the second, \( \tilde{\tilde{Z}} \) as the third, etc. The desired set \( \tilde{Y} \) will be the final set in this finite sequence. The choice of data used to produce the sequence of diffeomorphisms at each stage will be independent of \( Y \), and this will be sufficient by Lemma \ref{lem:1} to produce a single sequence of diffeomorphisms \( \phi_i \) as desired.
	
	The procedure will mirror the construction in the proof of \cite{reifenberg} Lemma 8, Chapter 1, in that the set \( X \) in \cite{reifenberg} Lemma 8, Chapter I is constructed via a sequence of cones over sets inductively constructed on orthogonal affine subspaces of decreasing codimension. At each stage, we will replace the cone used by Reifenberg with the construction in Lemma \ref{lem:2}. The proof of Theorem \ref{thm:main} will follow the proof of \cite{reifenberg} Lemma 8, Chapter 1 for the most part, with the exception that for the sake of brevity we combine the \( m=2 \) and \( m>2 \) cases into a single argument. See also footnote \ref{Rproof}. 
	
	However, because of the global nature of the constructions involved, our proof will necessarily be significantly more complex. Much of the difficulty will lie in ensuring the set \( \tilde{Y} \) is in the convex hull of \( A \), because doing so requires us to pick cone points on the boundary of the regions in which we are coning. Producing a sequence of diffeomorphisms which is independent of \( Y \) is also a source of complexity. However, neither of these properties are necessary for the application of Theorem \ref{thm:main} in \cite{elliptic}.
	
\begin{proof}[proof of Theorem \ref{thm:main}]
	For integers \( 2\leq m \leq N \leq n \) and \( 0\leq k\leq N \), let \( P(m,k,N,n) \) be the following proposition: \emph{There exists a constant \( 0<K^m_k<\i \) depending only on \( m \) and \( k \) such that if we are given a closed subset \( A \) of a ball \( B\subset \R^n \), an \( N \)-slice \( W \) of \( B \), an upper bound \( 0<L\leq \i \) for both the \( k \)-width of \( W \) and for the quantity \( 4\cdot\H^{m-1}(A\cap W)^{1/(m-1)} \), a closed \( (m-2) \)-rectifiable set \( S\subset A \) of finite \( \H^{m-2} \) measure, and some \( \zeta^m_k>0 \), then there exist: a closed set \( \tilde{A}\subset W\cap B \) such that
	\begin{enumerate}
		\item\label{prop:item:0} \( \tilde{A}\cap \fr B\subset A\cap \fr B \),
		\item\label{prop:item:1} \( \H^m(\tilde{A}) \leq K^m_k\cdot L\cdot \H^{m-1}(A\cap W), \) and
		\item\label{prop:item:2} if \( A\cap W \) is \( (m-1) \)-rectifiable then \( \tilde{A} \) is \( m \)-rectifiable;
	\end{enumerate}
	a closed \( (m-1) \)-rectifiable set \( \tilde{S}\subset \tilde{A} \) of finite \( \H^{m-1} \) measure such that
	\begin{enumerate}[resume]
		\item\label{prop:item:3} \( \tilde{A}\setminus \tilde{S} \subset \co\left((A\cap W)\setminus S\right)\cap N\left((A\cap W)\setminus S, K^m_k\cdot L \right); \)
	\end{enumerate} 
	and a sequence of diffeomorphisms \( (\phi_i)_{i\in \N} \) of \( \R^n \) fixing \( A\cup(\R^n\setminus B) \), such that if \( Y\subset B \) is closed and \( \fr(Y,W)\subset A \), then there exists a limit point \( \tilde{Y} \) of \( (\phi_i(Y))_{i\in \N} \) such that
	\begin{enumerate}[resume]
		\item\label{prop:item:4} the set \( \tilde{Y}\cap W \) is contained in \( \tilde{A}\cup (Y\cap \fr(W\cap B)), \)
		\item\label{prop:item:5} the set \( \tilde{Y} \) is equal to \( Y \) on the complement of \[ (A\cap W\cap \fr B)\cup\left(W\cap\textrm{int}(B)\right)\cup \textrm{int}\left(W(\zeta^m_k)\cap B\right), \] and
		\item\label{prop:item:6} the set \( \fr(\tilde{Y},W(\zeta^m_k)) \) is contained in \( \left(\tilde{A}\cap \fr(W\cap B)\right)\cup \fr(Y,W(\zeta^m_k)). \)
	\end{enumerate}}
	
	For integers \( 1\leq N\leq n \), let \( P(1,0,N,n) \) be the following proposition: \emph{If we are given a closed subset \( A \) of a ball \( B\subset \R^n \), an \( N \)-slice \( W \) of \( B \) such that \( A\cap W \) is empty, and some \( \zeta^1_0>0 \), then there exist a finite subset of \( W\cap B \) which we will denote by both \( \tilde{A} \) and \( \tilde{S} \), and a sequence of diffeomorphisms \( (\phi_i)_{i\in \N} \) of \( \R^n \) fixing \( A\cup(\R^n\setminus B) \), such that if \( Y\subset B \) is closed and \( \fr(Y,W) \) is empty, then there exists a limit point \( \tilde{Y} \) of \( (\phi_i(Y))_{i\in \N} \) such that
	\begin{enumerate}
		\item[\( (e') \)]\label{statement:item:1} the set \( \tilde{Y}\cap W  \) is contained in \( \tilde{A}\cup (Y\cap \fr(W\cap B)), \)
		\item[\( (f') \)]\label{statement:item:2} the set \( \tilde{Y} \) is equal to \( Y \) on the complement of \[ \left(W\cap\textrm{int}(B)\right)\cup \textrm{int}\left(W(\zeta^1_0)\cap B\right), \] and
		\item[\( (g') \)]\label{statement:item:3} the set \( \fr(\tilde{Y},W(\zeta^1_0)) \) is contained in \( \left(\tilde{A}\cap \fr(W\cap B)\right)\cup \fr(Y,W(\zeta^1_0)). \)
	\end{enumerate}}

	The proposition \( P(1,0,N,n) \) is implied by Lemma \ref{lem:2} applied to \( Q=W \), \( d=N \), any point \( p\in W\cap\textrm{int}(B) \) and \( \e=\zeta^1_0 \), letting \( \tilde{A}=\tilde{S}=\{p\} \).
	
	We now prove \( P(m,N,N,n) \) for \( 2\leq m \leq N \leq n \). In this case, \( W \) is an \( N \)-rectangle of side-length \( \leq L \). Let \( p \in (A\cap W)\setminus S \) if \( (A\cap W)\setminus S \) is non-empty, otherwise let \( p \) be an arbitrary point of the non-empty set \( W\cap\textrm{int}(B) \). Let \[ \tilde{A}= \{p\}\cup C(A\cap W,p) \] and let \( K^m_N=2^{2m-1}\sqrt{N} \). Property \ref{prop:item:0} is satisfied due to our choice of the point \( p \). Since \( W \) is contained in an \( N \)-cube of side length \( \leq L \), Property \ref{prop:item:1} is satisfied by Lemma \ref{lem:6}. Since the cone \( C(X,p) \) of an \( (m-1) \)-rectifiable set \( X \) is \( m \)-rectifiable, the set \( \tilde{A} \) is \( m \)-rectifiable whenever \( A\cap W \) is \( (m-1) \)-rectifiable, so Property \ref{prop:item:2} is satisfied.
	
	Let \( \tilde{S} = C(S\cap W,p)\subset \tilde{A}, \) which is a closed \( (m-1) \)-rectifiable set of finite \( \H^{m-1} \) measure. The set \( \tilde{A}\setminus \tilde{S} \) is contained by construction in \[ \co((A\cap W)\setminus S)\cap N((A\cap W)\setminus S, \sqrt{N}\cdot L) \] and so Property \ref{prop:item:3} is satisfied.
	
	Let \( (\phi_i)_{i\in \N} \) denote the sequence of diffeomorphisms produced by Lemma \ref{lem:2} applied to \( Q=W \), \( d=N \), the point \( p \) above, and using \( \e=\zeta^m_N \). Properties \ref{prop:item:4}-\ref{prop:item:6} follow from Lemma \ref{lem:2} and our choice of point \( p \) above. So, we have established \( P(m,N,N,n) \).
	
	Now for \( 2\leq m \leq N \leq n \) and \( 1\leq k\leq N \), we will assume \( P(m,k,N,n) \) and \( P(m-1,0,N-1,n) \) and deduce \( P(m,k-1,N,n) \). Since we know \( P(m,N,N,n) \), by downward induction on \( k \), this will prove the implication \( P(m-1,0,N-1,n) \Rightarrow P(m,0,N,n) \). Since we know \( P(1,0,n-(m-1),n) \), this will prove \( P(m,0,n,n) \) by induction on \( m \). The proposition \( P(m,0,n,n) \) implies Theorem \ref{thm:main}.

	So, suppose \( (A,\, B,\, W,\, L,\, S,\, \zeta^m_{k-1}) \) satisfies the assumptions of \( P(m,k-1,N,n) \). If the \( k \)-width of \( W \) is bounded above by \( L \), then \( (A,\, B,\, W,\, L,\, S,\, \zeta^m_{k-1}) \) satisfies the assumptions of \( P(m,k,N,n) \) and the resulting sets \( \tilde{A} \), \( \tilde{S} \) and sequence of diffeomorphisms \( (\phi_i)_{i\in \N} \) satisfy the conclusions of \( P(m,k-1,N,n) \), and so we are done. So, let us assume the \( k \)-width of \( W \) is strictly greater than \( L \), and in particular that \( L<\i \). Let \( G \) denote the unique \( N \)-dimensional affine subspace of \( \R^n \) which contains \( W \) and let \( x \) be a unit coordinate vector of \( \R^n \) which is parallel to \( G \), such that the width of \( W \) in the direction of \( x \) is strictly greater than \( L \). 
	
	Let \( \G \) be an affine hyperplane of \( G \) orthogonal to \( x \) and consider the map
	\begin{align*}
		\pi_\G:G&\to \R^n\\
		y&\mapsto y-L\,\lfloor \mathrm{dist}(y,\G)/L\rfloor \cdot x,
	\end{align*}
	where \( \mathrm{dist}(y,\G) \) is understood to be the signed distance from \( y \) to \( \G \), with positive (resp. negative) sign if \( y \) differs from a vector of \( \G \) by a positive (resp. negative) multiple of \( x \). There exists, by Lemma \ref{lem:4} applied to \( \pi_\G(A\cap W) \) and \( \pi_\G(S\cap W) \), a bi-infinite sequence \( \Sigma_\a \) of mutually disjoint, parallel affine hyperplanes of \( G \), perpendicular to \( x \), each of which is distance \( L \) from the preceding and next hyperplane in the sequence, whose union \( \Sigma \) satisfies
	\begin{equation}
		\label{inequality:-1}
		\H^{m-2}(A\cap W\cap \Sigma)\leq 2\cdot L^{-1}\cdot\H^{m-1}(A\cap W) \leq 2^{3-2m}\cdot L^{m-2}<\i
	\end{equation}
	and\footnote{In the case that \( m=2 \) we encounter the quantity \( \H^{-1}(S\cap W\cap \Sigma) \), which is zero if and only if \( S\cap W\cap\Sigma \) is empty, and otherwise is infinite.}
	\begin{equation}
		\label{nothing}
		\H^{m-3}(S\cap W\cap \Sigma)<\i.
	\end{equation}
	Moreover by \cite{federer} 3.2.22(2) applied to the map \[ y\mapsto \mathrm{dist}(y,\G) \] on the sets \( \pi_\G(S\cap W) \) and \( \pi_\G(A\cap W) \), noting that a set \( X\subset G \) is \( k \)-rectifiable if and only if \( \pi_\G(X) \) is \( k \)-rectifiable, we can choose \( \Sigma_\a \) so that \( S\cap W\cap \Sigma \) is \( (m-3) \)-rectifiable, and so that in the case \( A\cap W \) is \( (m-1) \)-rectifiable, the set \( A\cap W\cap \Sigma \) is \( (m-2) \)-rectifiable. 
	
	Relabeling, let \( \{ \Sigma_j : j\in J:=\{0,\dots,M+1\} \} \) denote the set of consecutive hyperplanes of \( \Sigma_\a \) such that for each \( j\in J\setminus \{0,M+1\} \), the set \( (W\cap B)\setminus\Sigma_j \) is disconnected, and such that \( W\cap B \) is contained in the closed region of \( G \) bounded by \( \Sigma_0 \) and \( \Sigma_{M+1} \).
	
	Note that for each \( j\in J\setminus \{0,M+1\} \), the set \( W\cap \Sigma_j \) is a \( (N-1) \)-slice of \( B \), and that if \( Y\subset B \) is closed and \( \fr(Y,W)\subset A \), then
	\begin{equation}
		\label{trick}
		\fr(Y,W\cap \Sigma_j)=\fr(Y,W)\cap \Sigma_j\subset A.
	\end{equation}
	Also note that if \( m=2 \) then \eqref{inequality:-1} implies \( A\cap W\cap \Sigma \) is empty, and that if \( m>2 \) then \[ 4\cdot \H^{m-2}(A\cap W\cap \Sigma)^{1/(m-2)} \leq L. \] So, we may apply \( P(m-1,0,N-1,n) \) using, for each \( j\in J\setminus \{0,M+1\} \), the input data \[ (A,\,B,\, W\cap \Sigma_j,\, L ,\, S,\, \zeta^{m-1}_0=\textrm{min}(L/4,\zeta^m_{k-1}/2)), \] to produce: a closed set \( X_j\subset W\cap \Sigma_j\cap B \) for each \( j\in J\setminus \{0,M+1\} \), such that
	\begin{enumerate}
		\item[\eqref{sub:item:0}] The set \( X_j\cap \fr B \) is contained in \( A\cap \fr B \), \eqnum\label{sub:item:0}
		\item[\eqref{sub:item:1}] \( \H^{m-1}(X_j) \leq K^{m-1}_0\cdot L\cdot \H^{m-2}(A\cap W\cap \Sigma_j), \)  and \eqnum\label{sub:item:1}
		\item[\eqref{sub:item:2}] If \( A\cap W \) is \( (m-1) \)-rectifiable, then \( X_j \) is also \( (m-1) \)-rectifiable; \eqnum\label{sub:item:2}
	\end{enumerate}
	a closed \( (m-2) \)-rectifiable set \( T_j\subset X_j \) of finite \( \H^{m-2} \) measure for each \( j\in J\setminus \{0,M+1\} \), such that
	\begin{enumerate}
		\item[\eqref{sub:item:3}] The set \( X_j\setminus T_j \) is contained in \[ \co\left((A\cap W\cap \Sigma_j)\setminus S\right)\cap N\left((A\cap W\cap \Sigma_j)\setminus S, K^{m-1}_0\cdot L\right); \] \eqnum\label{sub:item:3}
	\end{enumerate}
	and a sequence of diffeomorphisms \( (\psi_i^j)_{i\in \N} \) of \( \R^n \) for each \( j\in J\setminus \{0,M+1\} \), such that if \( (\psi_i)_{i\in \N} \) is an enumeration of \[ (\psi_{i_M}^M\circ\cdots\circ\psi_{i_1}^1)_{(i_1,\dots, i_M) \in \N\times\cdots\times \N} \] in the case that \( M\geq 1 \), otherwise if \( \psi_i=\textrm{Id} \) for all \( i\in \N \), then \( \psi_i \) fixes \( A\cup(\R^n\setminus B) \) for all \( i\in \N \) and if \( Y\subset B \) is closed and satisfies \( \fr(Y,W)\subset A \), then there exists by \eqref{trick}, Lemma \ref{lem:1}, our choice of \( \zeta^{m-1}_0<L/2 \), and \( P(m-1,0,N-1,n)\)\ref{prop:item:5} (resp. \( (f') \) if \( m=2 \),) a limit point \( Y' \) of \( (\psi_i(Y))_{i\in \N} \) such that
	\begin{enumerate}
		\item[\eqref{sub:item:4}] For each \( j\in J\setminus \{0,M+1\} \), \[ Y'\cap W\cap \Sigma_j \subset X_j\cup (Y\cap \fr(W\cap \Sigma_j \cap B)), \] \eqnum\label{sub:item:4}
		\item[\eqref{sub:item:5}] The set \( Y' \) is equal to \( Y \) on the complement of \[ (A\cap W\cap\Sigma\cap \fr B)\cup\left(W\cap\Sigma\cap\textrm{int}(B)\right)\cup \medcup_{j\in J\setminus \{0,M+1\}} \textrm{int}\left((W\cap \Sigma_j)(\zeta^{m-1}_0)\cap B\right), \textrm{ and} \] \eqnum\label{sub:item:5}
		\item[\eqref{sub:item:6}] For each \( j\in J\setminus \{0,M+1\} \), \[ \fr(Y',(W\cap \Sigma_j)(\zeta^{m-1}_0)) \subset \left(X_j\cap \fr(W\cap\Sigma_j\cap B)\right)\cup \fr(Y,(W\cap \Sigma_j)(\zeta^{m-1}_0)). \] \eqnum\label{sub:item:6}
	\end{enumerate}

	To simplify our notation later on, let \( X_0=T_0=X_{M+1}=T_{M+1}=\emptyset \). For \( j=\{0,\dots,M\} \), let \( W_j \) denote the closed region in \( W \) bounded by \( \Sigma_j \) and \( \Sigma_{j+1} \). Note that each such \( W_j \) is an \( N \)-slice of \( B \) whose \( k \)-width is bounded above by \( L \).
	
	Let \( \cal{A}=A\cup \cup_{j\in J} X_j \) and \( \cal{S}=S\cup\cup_{j\in J} T_j \). 
	
	A short calculation using \eqref{sub:item:4}, \eqref{sub:item:5} and \eqref{sub:item:6} shows that if \( Y\subset B \) is closed and \( \fr(Y,W)\subset A \), then
	\begin{equation}
		\label{trick3}
		\fr(Y',W_j) \subset \left(\fr(Y,W)\cap W_j \right) \cup X_j\cup X_{j+1}\subset (A\cap W_j)\cup X_j\cup X_{j+1} \subset \cal{A}.
	\end{equation}
	
	Also observe that by \eqref{sub:item:4} and \eqref{sub:item:5},
	\begin{equation}
		\label{trick4}
		Y'\cap \fr(W_j\cap B) \subset \left( Y\cap \fr(W\cap B) \right) \cup X_j \cup X_{j+1}.
	\end{equation}
	
	In a moment, we shall apply \( P(m,k,N,n) \) in sequence for each \( j=\{0,\dots,M\} \) using the input data \[ (\cal{A},\, B,\, W_j,\, \cal{L}_m,\, \cal{S},\, \zeta^m_k=\zeta^{m-1}_0/2), \] where\footnote{\label{Rproof}There is a small error at the corresponding point of Reifenberg's original proof of Theorem \ref{thm:lem8}. On page 17, his set \( X_i+X_{i+1}+A_i \), which corresponds to our set \( X_i\cup X_{i+1}\cup (A\cap W_i) \) does not necessarily satisfy the hypotheses of his proposition \( P(m_0,N,k_0) \), since the quantity \( \Lambda^{m-1}(X_i+X_{i+1}+A_i)^{1/(m-1)} \) may be too small. We fix this issue by introducing the quantity \( \cal{L}_m \) in our proof.} \( \cal{L}_m \) is an upper bound of \( L \) and \( 4\cdot\H^{m-1}(\cal{A}\cap W_j)^{1/(m-1)} \), determined as follows.
	
	By definition, \( \cal{A}\cap W_j=(A\cap W_j) \cup X_j \cup X_{j+1} \). We have by \eqref{inequality:-1} and \eqref{sub:item:1},
	\begin{align*}
		\H^{m-1}(\cal{A}\cap W_j)^{1/(m-1)} &\leq  \H^{m-1}(A\cap W_j)^{1/(m-1)} + \H^{m-1}(X_j)^{1/(m-1)} + \H^{m-1}(X_{j+1})^{1/(m-1)}\\
		&\leq \H^{m-1}(A\cap W)^{1/(m-1)} + 2\cdot \left(K_0^{m-1}\cdot L\cdot \H^{m-2}(A\cap W\cap \Sigma) \right)^{1/(m-1)}\\
		&\leq \H^{m-1}(A\cap W)^{1/(m-1)} + 2\cdot\left(K_0^{m-1}\cdot 2\cdot \H^{m-1}(A\cap W)\right)^{1/(m-1)}.
	\end{align*}
	Thus, by the definition of \( L \),
	\begin{align*}
		4\cdot\H^{m-1}(\cal{A}\cap W_j)^{1/(m-1)} \leq L+2\cdot\left(K^{m-1}_0\cdot 2\right)^{1/(m-1)} \cdot L.
	\end{align*}
	Thus, we may use
	\begin{equation}
		\label{calL2}
		\cal{L}_m:=\left(1+2\cdot(K^{m-1}_0\cdot 2\,)^{1/(m-1)} \right)\cdot L.
	\end{equation}
	Therefore, by \( P(m,k,N,n) \), using for each \( j=\{0,\dots,M\} \) the input data \[ (\cal{A},\, B,\, W_j,\, \cal{L}_m,\, \cal{S},\, \zeta^m_k=\zeta^{m-1}_0/2), \] there exist: a closed set \( \tilde{\cal{A}}_j\subset W_j\cap B \) for each \( j=\{0,\dots,M\}, \) such that
	\begin{enumerate}
		\item[\eqref{chop0}] The set \( \tilde{\cal{A}}_j\cap \fr B \) is contained in \( \cal{A}\cap \fr B \), \eqnum\label{chop0}
		\item[\eqref{chop1}] The inequality \( \H^m(\tilde{\cal{A}}_j) \leq K^m_k\cdot \cal{L}_m\cdot \H^{m-1}(\cal{A}\cap W_j) \) holds, and \eqnum\label{chop1}
		\item[\eqref{chop2}] If \( A\cap W \) is \( (m-1) \)-rectifiable, then \( \tilde{\cal{A}}_j \) is \( m \)-rectifiable by \eqref{sub:item:2}; \eqnum\label{chop2}
	\end{enumerate}
	a closed \( (m-1) \)-rectifiable set \( \tilde{\cal{S}}_j\subset \tilde{\cal{A}}_j \) of finite \( \H^{m-1} \) measure for each \( j\in\{0,\dots,M\} \), such that
	\begin{enumerate}
		\item[\eqref{chop3}] The set \( \tilde{\cal{A}}_j\setminus \tilde{\cal{S}}_j \) is contained in \[ \co\left((\cal{A}\cap W_j)\setminus \cal{S}\right)\cap N\left((\cal{A}\cap W_j)\setminus \cal{S}, K^m_k\cdot \cal{L}_m\right); \textrm{ and} \] \eqnum\label{chop3}
	\end{enumerate}
	and a sequence of diffeomorphisms \( (\phi_i^j)_{i\in \N} \) of \( \R^n \) for each \( j\in \{0,\dots,M\} \), such that if \( (\phi_i)_{i\in \N} \) is an enumeration of \[ \left(\phi_{i_M}^M\circ\dots\circ\phi_{i_0}^0\circ \psi_{\iota}\right)_{(\iota,i_0,\dots,i_M)\in \N\times(\N\times\dots\times \N)}, \] then \( \phi_i \) fixes \( A\cup (\R^n\setminus B) \) for all \( i\in \N \) (since \( A\subset \cal{A} \),) and by Lemma \ref{lem:1}, \eqref{trick3} and \( P(m,k,N,n) \)\ref{prop:item:5}, if \( Y\subset B \) is closed and \( \fr(Y,W)\subset A \), then there exists a limit point \( \tilde{Y} \) of \( (\phi_i(Y))_{i\in \N} \) such that 
	\begin{enumerate}
		\item[\eqref{chop4}] By \eqref{trick4}, the set \( \tilde{Y}\cap W_j \) is contained in \[ \tilde{\cal{A}}_j \cup \tilde{\cal{A}}_{j-1} \cup \left( Y\cap \fr(W\cap B) \right) \cup X_j \cup X_{j+1} \] for each \( j=\{0,\dots,M\} \), where \( \tilde{\cal{A}}_{-1} \) denotes the empty set, \eqnum\label{chop4}
		\item[\eqref{chop5}] The set \( \tilde{Y} \) is equal to \( Y \) on the complement of \[ (A \cap W\cap \fr B) \cup (W\cap \textrm{int}(B))\cup \textrm{int}(W(\zeta^m_{k-1})\cap B) \] by \( P(m,k,N,n) \)\ref{prop:item:5}, \eqref{sub:item:0}, \eqref{sub:item:5} and since \( \zeta^m_k<\zeta^{m-1}_0<\zeta^m_{k-1} \), and \eqnum\label{chop5}
		\item[\eqref{chop6}] The set \( \fr(\tilde{Y}, W_j(\zeta^m_k)) \) is contained in \[ \left(\tilde{A}\cap \fr(W\cap B) \right)\cup \fr(Y, W(\zeta^m_{k-1})), \] where \[ \tilde{A}:=(\cal{A}\cap W)\cup\medcup_{j\in \{0,\dots,M\}} \tilde{\cal{A}}_j, \] by \( P(m,k,N,n) \)\ref{prop:item:5}, \( P(m,k,N,n) \)\ref{prop:item:6}, \eqref{sub:item:5}, \eqref{sub:item:6}, and since \( \zeta^m_k<\zeta^{m-1}_0<\zeta^m_{k-1} \). \eqnum\label{chop6}
	\end{enumerate}
	
	Therefore, Properties \( P(m,k-1,N,n) \)\ref{prop:item:5} and \( P(m,k-1,N,n) \)\ref{prop:item:6} are satisfied. Property \( P(m,k-1,N,n) \)\ref{prop:item:0} follows from \eqref{chop0} and \eqref{sub:item:0}. Property \( P(m,k-1,N,n) \)\ref{prop:item:2} follows from \eqref{inequality:-1}, \eqref{sub:item:1} and \eqref{chop2}. Property \( P(m,k-1,N,n) \)\ref{prop:item:4} follows from \eqref{chop4}. We now show that Property \( P(m,k-1,N,n) \)\ref{prop:item:1} is satisfied for \( K^m_{k-1}<\i \) large enough:
	
	It follows from \( P(m,k,N,n) \)\ref{prop:item:1}, \eqref{inequality:-1}, and \eqref{sub:item:1}, that
	\begin{align*}
		\H^m(\tilde{A})&\leq K^m_k\cdot \cal{L}_m\cdot \sum_{j=0}^M \H^{m-1}(\cal{A}\cap W_j)\\
		&\leq K^m_k\cdot \cal{L}_m\cdot \sum_{j=0}^M \left(\H^{m-1}(A\cap W_j) + \H^{m-1}(X_j) + \H^{m-1}(X_{j+1})\right)\\
		&\leq K^m_k\cdot \cal{L}_m\cdot 2\cdot \left(\H^{m-1}(A\cap W) + \sum_{j=0}^M \H^{m-1}(X_j)\right)\\
		&\leq K^m_k\cdot \cal{L}_m\cdot 2\cdot \left(\H^{m-1}(A\cap W) + \sum_{j=0}^M K_0^{m-1} \cdot L \cdot \H^{m-2}(A\cap W\cap \Sigma_j) \right)\\
		&\leq K^m_k\cdot \cal{L}_m\cdot 2\cdot \left(\H^{m-1}(A\cap W) + 2\cdot K_0^{m-1} \cdot \H^{m-1}(A\cap W) \right)\\
		&= K^m_k\cdot \cal{L}_m\cdot 2\cdot \left(1 + 2\cdot K_0^{m-1} \right)\cdot \H^{m-1}(A\cap W).
	\end{align*}
	
	So, by \eqref{calL2}, Property \( P(m,k-1,N,n) \)\ref{prop:item:1} is satisfied as long as we define \[ K^m_{k-1} \geq K^m_k\cdot\left(1+2\cdot(K^{m-1}_0\cdot 2\,)^{1/(m-1)} \right)\cdot 2 \cdot \left(1 + 2\cdot K_0^{m-1} \right). \]

	Let us finally attack Property \( P(m,k-1,N,n) \)\ref{prop:item:3}: Let \[ \tilde{S}:=(\cal{S}\cap W)\cup \cup_{j\in \{0,\dots M\}} \tilde{\cal{S}_j}, \] which is by construction a closed, \( (m-1) \)-rectifiable subset of \( \tilde{A} \) of finite \( \H^{m-1} \) measure.
	
	By \eqref{sub:item:3}, we know that \( (\cal{A}\cap W)\setminus \cal{S} \subset \co((A\cap W)\setminus S) \) and by \eqref{chop3} that \( \tilde{A}\setminus\tilde{S} \subset \co((\cal{A}\cap W)\setminus \cal{S}) \). Therefore, \[ \tilde{A}\setminus\tilde{S} \subset \co((A\cap W)\setminus S). \]

	Likewise, by \eqref{chop3}, we have \[ \tilde{A}\setminus \tilde{S} \subset N\left((\cal{A}\cap W)\setminus \cal{S}, K^m_k\cdot \cal{L}_m\right). \] By \eqref{sub:item:3}, we have \[ (\cal{A}\cap W)\setminus \cal{S}\subset N\left((A\cap W)\setminus S, K^{m-1}_0\cdot L  \right), \]
	
	%\item[\eqref{sub:item:3}] If \( m=2 \) then \( X_j=T_j \), otherwise the set \( X_j\setminus T_j \) is contained in \[ \co\left((A\cap W\cap \Sigma_j)\setminus S\right)\cap N\left((A\cap W\cap \Sigma_j)\setminus S, K^{m-1}_0\cdot L\right); \] \eqnum\label{sub:item:3
	
	Therefore, by \eqref{calL2}, Property \( P(m,k-1,N,n) \)\ref{prop:item:3} holds if \[ K^m_{k-1} \geq K^m_k\cdot \left(1+2\cdot\left(K^{m-1}_0\cdot 2\right)^{1/(m-1)}\right) + K_0^{m-1}. \] Choosing such an appropriate constant \( K^m_{k-1}<\i \), the proposition \( P(m,k-1,N,n) \) holds.	
	\end{proof}

\addcontentsline{toc}{section}{References} 
\bibliography{bibliography.bib}{}
\bibliographystyle{amsalpha}
\end{document}